\documentclass[11pt]{amsart}
\usepackage{amssymb,latexsym,amsmath,amscd,amsthm,amsfonts, enumerate}
\usepackage{multirow}
\usepackage{color}
\usepackage[all]{xy}
\usepackage{etex}
\usepackage{caption}
\usepackage{soul}
\usepackage{float}
\usepackage{titletoc}
\usepackage[normalem]{ulem}
\usepackage{graphicx}

\usepackage{tikz,tikz-cd}
\usepackage{mathrsfs}
\usepackage[all]{xy}
\usepackage{tikz}
\usepackage{extarrows}
\usepackage{tikz-cd}
\usetikzlibrary{calc}
\usetikzlibrary{matrix,arrows,decorations.pathmorphing}
\usetikzlibrary{snakes}
\usetikzlibrary{shapes.geometric,positioning}
\usetikzlibrary{arrows,decorations.pathmorphing,decorations.pathreplacing}
\usetikzlibrary{positioning,shapes,shadows,arrows,snakes}

\usepackage[colorlinks=true,pagebackref,hyperindex]{hyperref}
\newcommand\myshade{85}
\colorlet{mylinkcolor}{violet}
\colorlet{mycitecolor}{red}
\colorlet{myurlcolor}{cyan}

\hypersetup{
  linkcolor  = mylinkcolor!\myshade!black,
  citecolor  = mycitecolor!\myshade!black,
  urlcolor   = myurlcolor!\myshade!black,
  colorlinks = true,
}

\usepackage{tabularx}
\newcolumntype{E}{>{\hsize=0.5cm \centering\arraybackslash}X}%
\newcolumntype{C}[1]{>{\hsize=#1\hsize \centering\arraybackslash}X}%

\numberwithin{equation}{section}
\newtheorem{theorem}{Theorem}[section]
\newtheorem{proposition}[theorem]{Proposition}
\newtheorem{proposition-definition}[theorem]{Proposition-Definition}

\newtheorem{lemma}[theorem]{Lemma}
\newtheorem{theoremA}{Theorem}

\theoremstyle{definition}
\newtheorem{remark}[theorem]{Remark}
\newtheorem{example}[theorem]{Example}
\newtheorem{definition}[theorem]{Definition}

\definecolor{dark-green}{RGB}{14,150,2}
\definecolor{red}{RGB}{250,0,0}

\begin{document}

\title[Global dimension of a string algebra ]{Global dimension of a string algebra }

\author[Zheng Xin]{Zheng Xin}
\address{Zheng Xin,
	School of Mathematics and Statistics, Shaanxi Normal University, Xi'an 710062, China}
\email{xinzheng1314@yeah.net}

\author[LingChun Zhang]{LingChun Zhang*}
\address{LingChun Zhang,
	School of Mathematics and Statistics, Shaanxi Normal University, Xi'an 710062, China}
\email{zlcxpy912@163.com}

\thanks{
	}
\thanks{*Corresponding author}

\keywords{}

\thanks{}

%

\subjclass[2020]{16G10, 16E10, 18G20}

\begin{abstract}
In this paper, we characterize the global dimension of a string algebra by using combinatorial methods. 
Moreover, we establish a necessary and sufficient condition for when the global dimension of a string algebra is infinite.
\end{abstract}

\maketitle
\setcounter{tocdepth}{2} 

\tableofcontents

\section*{Introduction}\label{Introductions}

Global dimension plays an important role in the development of algebra, with extensive applications in ring theory, homological theory, representation theory and algebraic geometry. In recent years, the characterization of the global dimension of various finite-dimensional algebras has been a key focus of in-depth research in homological theory. The global dimensions of tree algebras \cite{H14}, gentle algebras \cite{C26,LGH24}, locally gentle algebras \cite{FOZ24} and almost gentle algebras \cite{WHL25} have been studied.
As a generalization of gentle algebras, string algebras are important finite-dimensional algebras introduced in \cite{BR87}. However, \cite{FGR21,FGR22} only established a sufficient condition for their infinite global dimension. 
Despite these notable advances in the study of related algebras, there still seems to be no systematic characterization of the global dimension for string algebras. 

In this paper, we use combinatorial methods to describe the projective resolution of simple modules over a string algebra. The minimal relation chains mentioned in the following results are given in Definition \ref{def:min-rel-chain}.

\begin{theoremA}[Theorem \ref{thm:gldim}]\label{Mtheorem:object}
	The global dimension of a string algebra is the maximum of the length of the minimal relation chains whose initial term is an arrow.
\end{theoremA}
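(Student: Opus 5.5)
Since $A=kQ/I$ is finite-dimensional with radical generated by arrows, $\operatorname{gl.dim} A=\max_{v\in Q_0}\operatorname{pd} S(v)$. We therefore compute the minimal projective resolution of each simple module $S(v)$ combinatorially, and then match its length with the lengths of minimal relation chains (Definition \ref{def:min-rel-chain}).

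The first syzygy is $\Omega S(v)=\operatorname{rad}P(v)$. For a string algebra, at most two arrows $\alpha,\beta$ start at $v$, and $\operatorname{rad}P(v)=\alpha A\oplus\beta A$. Each summand $\alpha A$ is a uniserial-type string module spanned by the maximal path $\alpha\alpha_2\cdots\alpha_m$ that avoids the zero relations. Indeed, the string condition (for each arrow $\gamma$ at most one arrow $\delta$ with $\gamma\delta\notin I$) makes $\alpha A$ a string module on a direct string. Its projective cover is $P(t(\alpha))\to\alpha A$, and the kernel is generated by the paths $\alpha_2\cdots\alpha_m\gamma$ where $\alpha\alpha_2\cdots\alpha_m\gamma\in I$ is a minimal relation. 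If no such $\gamma$ exists, the module $\alpha A$ is projective. In general, the syzygy of a module $pA$ for a path $p$ is $\bigoplus qA$, where $q$ runs over paths such that the pair $(p,q)$ overlaps in a minimal zero relation; only the part of $q$ beyond the overlap starts the next term. So every syzygy of $S(v)$ is a direct sum of modules $qA$ with $q$ a path, and we get a recursion: the module $q_iA$ produces $q_{i+1}A$ exactly when $q_iq_{i+1}$ contains a minimal relation starting inside $q_i$ and ending at the end of $q_{i+1}$. This is presumably what the definition of a minimal relation chain encodes: a sequence $q_0=\alpha,q_1,q_2,\dots$ in which consecutive terms are linked by relations in this overlapping way. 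Before relying on this, I will check that the recursion for the next term uses the correct $q$: the shortest path completing a relation, rather than any overlap. I expect to need a lemma stating that $\Omega(pA)\cong\bigoplus_q qA$ with $q$ uniquely determined by the string condition, with at most one $q$ per arrow-branch.

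Iterating gives the following. The terms of the minimal projective resolution of $S(v)$ in degree $n$ are $\bigoplus P(s(q_n))$ over the minimal relation chains $\alpha=q_0,q_1,\dots,q_{n-1}$ with $\alpha$ an arrow starting at $v$. The resolution stops exactly when all chains terminate. Hence $\operatorname{pd}S(v)$ equals the maximal length of a minimal relation chain starting with an arrow at $v$, with the indexing convention of the paper: length $n$ means $n$ terms. If some chain is infinite (a cycle of relations), then $\operatorname{pd}=\infty$. Taking the maximum over $v$ gives the theorem.

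The main obstacle is the syzygy lemma. We must show that $\Omega(pA)$ really decomposes as a direct sum of modules $qA$, one for each branch, and that these summands are indecomposable and are not cancelled or merged. This is what makes the resolution minimal, so that the length is computed exactly rather than only bounded above. To prove it, I will use the basis of $pA$ given by the paths $pw\notin I$ and the string condition to show that the kernel of $e_{t(p)}A\to pA$, $x\mapsto px$, is spanned by the paths $w$ with $pw\in I$. Among these, the minimal ones form at most two branches, each a direct path generating $qA$ freely. Minimality then follows because each kernel lies in the radical.
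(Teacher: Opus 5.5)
Your plan is essentially the paper's proof: you identify $\Omega^n S(v)$ with $\bigoplus \mathscr{L}(W)A$ over $W\in\mathsf{Rel}(\alpha)_n$, $\alpha\in s^{-1}(v)$, via the short/long kernel decomposition $\ker p(w)=aA\oplus b^*A$, and then maximize over $v$ using Lemma \ref{lem:gldimA}; the paper imports that decomposition as Lemma \ref{Lem: Ker} rather than reproving it, and packages the recursion as the explicit resolution of Lemma \ref{lem:proj-formula}. One local slip: in your second paragraph you list only generators of the form $\alpha_2\cdots\alpha_m\gamma$, which for $m\ge 2$ misses the short kernel (a second arrow $a$ at $t(\alpha)$ with $\alpha a\in I$), so ``$\alpha A$ is projective if no such $\gamma$ exists'' is false there; your final paragraph's ``two branches'' restores it, and the degree-$n$ term should read $\bigoplus P(t(q_{n-1}))$.
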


As an application, we give a necessary and sufficient condition for a string algebra to have infinite global dimension, which is different from the sufficient condition in \cite{FGR21,FGR22}.
\begin{theoremA}[Proposition \ref{pro:gldim-infinite}]
	A string algebra has infinite global dimension if and only if there exists an infinite length minimal relation chain whose initial term is an arrow.
\end{theoremA}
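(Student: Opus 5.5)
We reduce to the finite case via Theorem \ref{thm:gldim}: for the finite-dimensional algebra $A=kQ/I$, $\operatorname{gldim}A=\sup_{i\in Q_0}\operatorname{pd}S(i)$. Here we use two facts. The global dimension is the supremum of projective dimensions of simple modules. There are only finitely many simples $S(i)$.

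We then use the combinatorial description of the minimal projective resolution of each simple $S(i)$ that underlies Theorem \ref{thm:gldim}. The first syzygy $\Omega S(i)=\operatorname{rad}P(i)$ is a direct sum of at most two uniserial-string modules, one for each arrow $\alpha$ starting at $i$. Each subsequent syzygy is again a direct sum of string modules, and each summand is governed by a term of a minimal relation chain. The chain starts at the arrow $\alpha$ and proceeds by alternately taking the minimal path completing a relation and then the maximal path avoiding relations. Consequently, $\operatorname{pd}S(i)$ equals the maximum over arrows $\alpha$ with $s(\alpha)=i$ of the length of the minimal relation chain with initial term $\alpha$, where length is counted as in Definition \ref{def:min-rel-chain}.

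\textbf{($\Leftarrow$)} Suppose some minimal relation chain with initial term an arrow $\alpha$ has infinite length. Then the resolution of $S(s(\alpha))$ never terminates, since each term of the chain produces a nonzero syzygy summand. Hence $\operatorname{pd}S(s(\alpha))=\infty$, and so $\operatorname{gldim}A=\infty$.

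\textbf{($\Rightarrow$)} Suppose every minimal relation chain starting with an arrow is finite. There are only finitely many arrows, and by minimality each arrow determines its chain uniquely, so there are only finitely many such chains. Their lengths therefore have a finite maximum. By Theorem \ref{thm:gldim}, $\operatorname{gldim}A$ equals this maximum, which is finite.

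The main point needing care is the claim used in ($\Rightarrow$) that each arrow determines a \emph{unique} minimal relation chain, or at least finitely many. If Definition \ref{def:min-rel-chain} allows branching, the fallback is a König's-lemma argument. The chains form a finitely branching tree, because at each step there are at most two arrows in and out of each vertex in a string algebra. If the lengths of chains starting at arrows were unbounded with every chain finite, König's lemma would produce an infinite branch. That branch is itself a minimal relation chain of infinite length, a contradiction. Alternatively, a pigeonhole argument on the finitely many possible terms shows that a sufficiently long chain must repeat a term. By the determinism of the continuation rule, it then becomes periodic, hence infinite. Either version gives a finite uniform bound, which completes the equivalence.
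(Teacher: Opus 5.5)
Your argument is essentially the paper's. Both directions are read off from $\textbf{proj.dim}\,S(v)=\max_{\alpha\in s^{-1}(v),\,W\in\mathsf{Rel}(\alpha)} l(W)$ (from Lemma \ref{lem:proj-formula}) together with $\textbf{gl.dim}\,A=\max_{v\in Q_0}\textbf{proj.dim}\,S(v)$ over finitely many vertices. Your ($\Leftarrow$) direction is exactly the paper's.

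Two points in ($\Rightarrow$) need fixing.

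First, your primary claim that each arrow determines a unique chain is false. A chain branches whenever $p(\mathscr{L}(W))$ has both a short and a long kernel; in the paper's example, both $a\cdot b\cdot d$ and $a\cdot b\cdot e$ lie in $\mathsf{Rel}(a)$. So the K\"onig's-lemma fallback is the argument you actually need, and it is valid. The chains form a forest with finitely many roots (the arrows). Each node $W$ has at most two children, namely its extensions by the short and long kernel of $p(\mathscr{L}(W))$ (Lemma \ref{Lem: Ker}). Your stated reason, ``at most two arrows out of each vertex'', only bounds the possible first arrows. You also need that for each first arrow the minimal path is unique; alternatively, finite branching follows simply because $\mathbf{Pa}$ is finite. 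This step is in fact more careful than the paper, which passes directly from ``the maximum is $\infty$'' to ``some $W$ has $l(W)=\infty$''.

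Second, in the pigeonhole variant, do not appeal to determinism, since the continuation is not deterministic. Suppose $w_i=w_j$ with $i<j$. The condition ``$w_{k+1}$ is minimal for $w_k$'' only involves consecutive terms, so repeating the block $w_{i+1}\cdots w_j$ indefinitely yields an infinite chain.

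Finally, your description of chains as alternating between minimal paths and maximal paths does not match Definition \ref{def:min-rel-chain}, where every term is simply minimal for the previous one. This is harmless only because you use the length formula as a black box.
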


\section*{Acknowledgments}
We would like to thank our supervisor, Prof. Wen Chang, for his guidance and assistance during our weekly seminar.  We would like to thank the refere for the helpful comments.

\section{Preliminaries}\label{Preliminaries}

In this paper, an algebra will be assumed to be basic with finite dimension over a base field $\mathbf{k}$ which is algebraically closed. 

\subsection{Path algebras}\label{subsection: path algebras}
In this subsection, we recall some basic definitions of path algebras.

A \emph{quiver} \( Q = (Q_0, Q_1, s, t) \) is a directed graph, which we always assume to be finite, with \( Q_0 \) the set of vertices, \( Q_1 \) the set of arrows, and \( s, t: Q_1 \to Q_0 \) two functions, sending an arrow to its start and target respectively. For an arrow \(\alpha\), let \(\alpha^{-1}\) be its \emph{forma inverse}, and \(s(\alpha^{-1}) = t(\alpha)\), \(t(\alpha^{-1}) = s(\alpha)\). Denote the set consisting of all inverses of arrows in $Q_{1}$ by \(Q_{1}^{-1}\).
A \emph{loop} (at a vertex \( v \in Q_0 \)) is an arrow \( \epsilon \in Q_1 \) with \( s(\epsilon) = t(\epsilon) (= v) \). A \emph{path} in \( Q \) of length \( n \geq 1 \) is a sequence \(p = a_1 \cdots a_n \) of arrows such that \( t(a_i) = s(a_{i+1}) \) for \( 1 \leq i \leq n - 1 \). The \emph{inverse} of the path $p$ is \(p^{-1} = a_n^{-1} \dots a_1^{-1}\), which satisfies \(s(a_i^{-1}) = t(a_i) = s(a_{i + 1}) = t(a_{i + 1}^{-1})\) for \(1 \leq i \leq n - 1\). In addition, to each vertex \( v \in Q_0 \), we associate a \emph{trivial path} \( e_v \) of length \( 0 \) with \( s(e_v) = v = t(e_v) \). If the number of arrows in a path is finite, then the path is called a \emph{finite path}.

By \( \mathbf{k}Q \) we denote the \emph{path algebra} of \( Q \). An ideal \( I \) of \( \mathbf{k}Q \) is called \emph{admissible} if \( R_Q^m \subseteq I \subseteq R_Q^2 \) for some \( m \geq 2 \), where \( R_Q \) denotes the ideal of \( \mathbf{k}Q \) generated by \( Q_1 \). The quotient algebra \( \mathbf{k}Q / I \) is finite dimensional whenever \( I \) is admissible. Arrows in a quiver are composed from left to right as follows: for arrows $a$ and $b$ we write $ab$ for the path from the source of $a$ to the target of $b$. In general, we consider right modules. We denote  by $P_{v} = e_{v}A$ the corresponding indecomposable projective $A-$module.

We use \(\mathbf{Pa}\) to represent the collection of all paths in \(\mathbf{k}Q/I\), which is precisely the set of all paths in Q that lie outside $I$. Denote by \(\mathbf{Pa}^{-1}\)  the set consisting of the inverses of all paths. Additionally, \(\mathbf{Pa}_{\geq l}\) (respectively, \(\mathbf{Pa}_{> l}\)) stands for the set of all paths in \(\mathbf{Pa}\) with length greater than or equal to a given non-negative integer $l$ (respectively, greater than $l$). 
Each element in \(A = \mathbf{k}Q/I\) can be uniquely expressed as a linear combination of elements in \(\mathbf{Pa}\), so we can regard \(\mathbf{Pa}\) as a basis for $A$.

The set \(\mathbf{M}\), consisting of \emph{maximal paths} in \(\mathbf{Pa}\), plays a crucial role in this entire theory. A path \(w = w_1 \cdots w_n\) in \(\mathbf{k}Q/I\) is \emph{maximal} in \(\mathbf{k}Q/I\), for every pair of arrows \(a, b \in Q_1\), neither $a w$ nor $wb$ is a path in \(\mathbf{k}Q/I\). Moreover, a non-trivial path $w$ in $Q $ belongs to \(\mathbf{Pa}\) if and only if it is a sub-path of a maximal path \(\widetilde{w}\) that is not contained in $I$. This maximal path takes the form \(\widetilde{w} = \hat{w} w \bar{w}\), where \(\hat{w}\) and \(\bar{w}\) are in \(\mathbf{Pa}\). For ease of future reference, the path \(\bar{w} \in \mathbf{Pa}\) is referred to as a \emph{right completion} of $w$.

For any element \( w \in \textbf{Pa}_{\geq 1} \) (with \( s(w) = v_{1} \) and \( t(w) = v_{2} \)), define the following morphism:
\[
p(w) : P(v_{2}) \to P(v_{1})
\]
\[
u \mapsto v = w u.
\]
Thus, any homomorphism
from $P(v_{2})$ to $P(v_{1})$ is associated to a linear combination of paths like $w$.

\subsection{String algebras}\label{subsection: string algebras}

In this subsection, we recall some basic definitions and propositions of  a string algebra.
The finite dimensional algebra $A$ is said to be \emph{monomial} if $I$ is generated by paths
of length at least two.

\begin{definition}\cite{AS87}\label{definition:string algebras}
We call an algebra $A=\mathbf{k}Q/I$ a \emph{string algebra}, if $Q=(Q_0,Q_1)$ is a finite quiver and $I$ is an admissible ideal of $\mathbf{k}Q$ satisfying the following conditions:
\begin{enumerate}[\rm(S1)]
 \item Each vertex in $Q_0$ is the source of at most two arrows and the target of at most two arrows.

 \item For each arrow $a$ in $Q_1$, there is at most one arrow $b$ such that  $ab\notin I$; at most one arrow $c$ such that  $ ca\notin I$.\label{S2} 

 \item $I$ is generated by paths of length at least two, i.e. $A$ is monomial.
\end{enumerate}
\end{definition}

Let $A = \mathbf{k}Q/I$ be a string algebra. 
A sequence \( W = w_1 \dots w_n \) is called a \emph{walk} (resp. \emph{generalized walk}) if each \( w_i \) belongs to \( Q_1 \cup Q_1^{-1} \) (resp. \( \textbf{Pa} \cup \textbf{Pa}^{-1} \)) and satisfies \( t(w_i) = s(w_{i + 1}) \) for all \( 1 \leq i \leq n-1 \).
The length of this sequence is \( n \), denoted as \( l(W) = n \). Obviously, \( s(W) = s(w_1) \) and \( t(W) = t(w_n) \). The inverse of a walk (generalized walk) is defined similarly to the inverse of a path. For a walk \( W = \alpha_1 \dots \alpha_n \) (\( \alpha_i \in Q_1 \cup Q_1^{-1}, 1 \leq i \leq n \)), its subwalk is \( \alpha_i \dots \alpha_j \) (\( 1 \leq i \leq j \leq n \)), and we call \( \alpha_i \) a \emph{letter} of the walk \( W \).

The \emph{concatenation} of two walks (generalized walks) \( W = w_1 \dots w_n \) and \( W' = w_1' \dots w_n' \) is defined as the walk \( WW' = w_1 \dots w_n w_1' \dots w_n' \), where \( s(w_1') = t(w_n) \).

The initial preliminary outcome that lies within the purview of our interests concerning string algebras serves to establish the fact that the right completion of an arrow $a$, which is denoted by \(\bar{a}\), possesses the property of uniqueness. 

\begin{lemma}\cite[Lemma 5]{FGR21}\label{lem:right-unique}
	Let \(A = \mathbf{k}Q/I\) be a string algebra and let \(a \in Q_1\). Then \(\bar{a}\) is unique.
\end{lemma}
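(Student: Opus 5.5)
The idea is that, because of axiom (S2), continuing a path to the right never involves a choice, so the right completion of $a$ is forced arrow by arrow. Recall that $\bar{a}$ is a path in $\mathbf{Pa}$ such that $a\bar{a}$ is a nonzero path in $\mathbf{k}Q/I$ that cannot be extended on the right, i.e.\ $a\bar{a}b \in I$ (or is not a path) for every arrow $b$. I would construct this completion explicitly as a maximal right extension of $a$ and show that it is unique.

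First I will show, by induction on length, that for each $n\ge 0$ there is at most one path $a b_1\cdots b_n$ of length $n+1$ that starts with $a$ and does not lie in $I$. The case $n=0$ is trivial. Suppose $a b_1\cdots b_n \notin I$ and $a b'_1\cdots b'_n\notin I$. Since $I$ is an ideal, the prefixes $ab_1\cdots b_{n-1}$ and $ab'_1\cdots b'_{n-1}$ are also not in $I$. By induction they are equal, so $b_i=b'_i$ for $i<n$. Now $b_{n-1}b_n\notin I$ and $b_{n-1}b'_n\notin I$, because these are subpaths of paths outside $I$; when $n=1$, read $b_0=a$. Axiom (S2) allows at most one arrow $b$ with $b_{n-1}b\notin I$, hence $b_n=b'_n$.

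Next I use admissibility, $R_Q^m\subseteq I$. Every nonzero path starting with $a$ has length less than $m$. By the previous step, the nonzero paths starting with $a$ therefore form a finite chain $a,\ ab_1,\ ab_1b_2,\dots,ab_1\cdots b_k$, each one a prefix of the next. The right completion must satisfy two conditions: $a\bar a$ lies on this chain, and it has no nonzero right extension. Only the top element of the chain satisfies both. Hence $\bar a=b_1\cdots b_k$, with $\bar a=e_{t(a)}$ when $k=0$.

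The remaining point is to reconcile this with the paper's definition, which phrases the completion inside a maximal path $\hat w w\bar w$ that may also extend to the left. I would argue that left extension does not affect right maximality. If $\hat w a\bar w$ is maximal, then in particular $\hat w a \bar w b\in I$ for every arrow $b$. Monomiality, (S3), together with (S2) should let me reduce this to $a\bar w b\in I$, but this step is genuinely subtle: a zero relation could straddle $\hat w$ and $\bar w b$. So I would read $\bar w$ for $w=a$ as the maximal right extension of $a$ itself, in the sense of \cite{FGR21}, and note that this makes the completion depend only on $a$. That definitional check is the only potential obstacle; the induction using (S2) carries the real content.
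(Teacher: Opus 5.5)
The paper only cites this lemma and gives no proof of its own, so your proposal has to stand on its own. The core argument does. Prefixes and subpaths of paths outside $I$ stay outside $I$, (S2) then forces each successive arrow, and $R_Q^m\subseteq I$ makes the chain $a, ab_1,\dots,ab_1\cdots b_k$ finite. This correctly proves that for each arrow $a$ there is exactly one path $\bar a$ with $a\bar a\notin I$ and $a\bar a b\in I$ for every $b\in Q_1$.

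You should change how you treat the step you hedged on. The reduction from ``$\hat a a\bar a b\in I$ for all $b$'' to ``$a\bar a b\in I$ for all $b$'' is false, not merely subtle. In fact the lemma fails if $\bar a$ is read off from an arbitrary maximal path $\hat a a\bar a$, as the paper's wording literally says.

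Take $1\xrightarrow{x}2\xrightarrow{a}3\xrightarrow{b}4$ with $I=\langle xab\rangle$. This is a string algebra with $R_Q^3\subseteq I\subseteq R_Q^2$. Both $xa$ and $ab$ are maximal paths in the paper's sense, and they give the two different ``right completions'' $e_3$ and $b$ of $a$. The relation $xab$ straddles $\hat a=x$ and $b$, exactly as you feared. The loop algebra $\mathbf{k}[\epsilon]/(\epsilon^3)$ shows the same ambiguity even inside the single maximal path $\epsilon^2$, where $\bar\epsilon$ could be $\epsilon$ or the trivial path.

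So do not try to prove that step. Instead, state as the definition that $\bar a$ is the right-maximal extension of $a$. This reading is forced: it is the only one under which the lemma is true. It is also the reading the paper needs later. In the same example, $\ker p(x)=abA$, and the long kernel $ab$ is a subpath of $a\bar a$ only when $\bar a=b$, not when $\bar a=e_3$.

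With that definition made explicit, your proof is complete.
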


Next, we introduce the structure of the kernel of the morphism \( p(w) \), where \( w \in \textbf{Pa}_{>0} \), which can be found in \cite{FGR21}.
According to the definition of a string algebra, there exist at most two distinct arrows \( a, b \) such that \( s(a) = s(b) = t(w) \). We know that the case where \( wa \neq 0 \) and \( wb \neq 0 \) cannot exist simultaneously, and at least one of \( wa \) and \( wb \) is equal to \( 0 \). Therefore, without loss of generality, we assume \( wa = 0 \). For a string algebra, there may exist \( wb \neq 0 \), or there may exist a path \( p = bp_1 \cdots p_r \) (where \( p_1, \cdots, p_r \in \textbf{Pa}_{>0} \)) such that \( wp = 0 \). If such a path \( p \) exists, we denote by \( b^* \) the shortest path satisfying this condition. Then, \( b^* \) is the shortest subpath of \( b\bar{b} \) and satisfies the property \( wb^* = 0 \).

\begin{lemma}\cite[Lemma 6]{FGR21}\label{Lem: Ker}
	If \( A = \mathbf{k}Q/I \) is a string algebra and \( w \in \textbf{Pa}_{>0} \), then the general structure of the kernel of the morphism \( p(w) \) is $\mathrm{ker}p(w) = aA \oplus b^*A$.
\end{lemma}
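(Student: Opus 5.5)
We need to prove Lemma \ref{Lem: Ker}: for $w\in\mathbf{Pa}_{>0}$ with $t(w)=v$, $\ker p(w)=aA\oplus b^*A$, where $a,b$ are the (at most two) arrows starting at $v$, $wa=0$, and $b^*$ is the shortest subpath of $b\bar b$ with $wb^*=0$. If $b$ or $b^*$ does not exist, the corresponding summand is read as $0$.

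The approach is to work with the path basis $\mathbf{Pa}$ of $P(v)=e_vA$ and use that $A$ is monomial. The map $p(w)$ sends each basis path $u$ starting at $v$ to $wu$. Since $I$ is generated by paths, $wu$ is either zero or again a basis path. Moreover, distinct $u$ with $wu\neq 0$ give distinct paths $wu$. Hence $\ker p(w)$ is spanned by the basis paths $u$ with $wu=0$, and the first step is to establish this reduction.

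The second step is to classify those paths $u$.
\begin{enumerate}
\item The trivial path $e_v$ is never killed, since $w\neq 0$.
\item A nontrivial $u$ begins with $a$ or with $b$. If $u=au'$, then $wu=(wa)u'=0$, so $u\in aA$.
\item If $u=bu'$, then $u$ is a subpath-prefix of $b\bar b$. This holds because, by (S2) and Lemma \ref{lem:right-unique}, every nonzero path starting with $b$ is a prefix of the unique maximal continuation $b\bar b$.
\end{enumerate}
In case (3), monomiality gives $wu=0$ exactly when $u$ has a prefix $p$ with $wp=0$. Equivalently, $wu=0$ exactly when $u$ is at least as long as the shortest such prefix, which is $b^*$ by definition. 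Thus $u=b^*u''$ and $u\in b^*A$. Conversely, $aA$ and $b^*A$ clearly lie in the kernel. This proves $\ker p(w)=aA+b^*A$.

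The third step is to show the sum is direct. The spanning paths of $aA$ start with $a$ and those of $b^*A$ start with $b$. These are disjoint subsets of the basis $\mathbf{Pa}$, so $aA\cap b^*A=0$.

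The main obstacle is the claim in case (3) that the paths starting at $b$ are totally ordered by the prefix relation. This needs (S2) at every letter, not just at $b$, in order to identify $b^*$ as a single generator. That is exactly where the string condition is used.
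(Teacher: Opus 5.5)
Your proof is correct and follows the route the paper indicates in its remark after the lemma; the lemma itself is quoted from \cite{FGR21} without proof. You reduce via monomiality to basis paths $u$ with $wu=0$, use (S2) to see that any such $u$ beginning with $b$ and $b^*$ itself are both prefixes of the single path $b\bar b$ (so $b^*$ is a prefix of $u$), and obtain directness from the disjointness of basis paths starting with $a$ and with $b$. You are also right to read $b\bar b$ as the longest nonzero path beginning with $b$, which (S2) makes unique, and the degenerate case in which $w$ kills no arrow at $t(w)$ (so the summand $aA$ is absent) goes through verbatim.
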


\begin{remark}
	(1) The proof of Lemma \ref{Lem: Ker} uses the fact that both \( u \) and \( b^* \) are subpaths of \( b\bar{b} \), and the right completion \( \bar{b} \) of \( b \) is unique.
	
	(2) For the convenience of subsequent discussion, we artificially distinguish \( a \) and \( b^* \) here, and denote \( a \) as the \emph{short kernel} of \( p(w) \), and \( b^* \) as the \emph{long kernel} of \( p(w) \).
\end{remark}

\begin{definition}\label{def:minimal sub-path}
	For \( w, u \in \textbf{Pa}_{>0} \), \( s(u) = t(w) \), and \( wu = 0 \), if any subpath \( u' (u' \neq u) \) of \( u \) does not satisfy \( wu' = 0 \), then the path \( u \) is called minimal for the path \( w \).
\end{definition}
\begin{remark}
	It follows from Definition \ref{def:minimal sub-path} that both the short kernel \( a \) and the long kernel \( b^* \) of the morphism \( p(w) \) are minimal for the path \( w \).
\end{remark}
 
The following conclusion provides a method for calculating the global dimension of finite-dimension $\mathbf{k}$-algebras.
\begin{lemma}\cite{We04}\label{lem:gldimA}
	The global dimension of finite-dimension $\mathbf{k}$-algebras is equal to the supremum of the projective dimensions of all its simple modules.
\end{lemma}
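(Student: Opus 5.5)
The plan is to prove the two inequalities separately. For the first, recall that the global dimension is the supremum of $\mathrm{pd}\,M$ over all (right) $A$-modules $M$, so $\mathrm{gl.dim}\,A \geq \sup_S \mathrm{pd}\,S$, with $S$ running over the simple modules. If this supremum is infinite there is nothing more to prove. So assume $\mathrm{pd}\,S \leq n$ for every simple $S$; since $A$ is basic, finite dimensional and $\mathbf{k}$ is algebraically closed, there are only finitely many simples $S_v$, $v\in Q_0$. We then need to show $\mathrm{pd}\,M \leq n$ for every module $M$.

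\emph{Step 1: reduction to finitely generated modules.} By Auslander's theorem, for any ring the global dimension equals the supremum of the projective dimensions of cyclic modules. A cyclic module over the finite-dimensional algebra $A$ is finitely generated, hence of finite length. So it suffices to show $\mathrm{pd}\,M\le n$ for every finite-length $M$.

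\emph{Step 2: vanishing of $\mathrm{Ext}^{n+1}$.} The hypothesis gives $\mathrm{Ext}^{n+1}_A(S,X)=0$ for every simple $S$ and every module $X$. Fix $X$ and induct on the length of $M$. Choose a short exact sequence $0\to M'\to M\to S\to 0$ with $S$ simple and $M'$ of smaller length. The long exact Ext sequence contains the segment
\[
\mathrm{Ext}^{n+1}(S,X)\to \mathrm{Ext}^{n+1}(M,X)\to \mathrm{Ext}^{n+1}(M',X).
\]
The outer terms vanish, so the middle term does too. Hence $\mathrm{Ext}^{n+1}(M,X)=0$ for all finite-length $M$ and all $X$.

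\emph{Step 3: dimension shifting.} Take a projective resolution of $M$ by finitely generated projectives and let $\Omega^n M$ be its $n$-th syzygy. Dimension shifting gives $\mathrm{Ext}^1(\Omega^n M, X)\cong \mathrm{Ext}^{n+1}(M,X)=0$ for all $X$. Apply this with $X=\Omega^{n+1}M$: the short exact sequence $0\to\Omega^{n+1}M\to P_n\to \Omega^nM\to 0$ then splits. So $\Omega^nM$ is a direct summand of $P_n$ and is projective, which gives $\mathrm{pd}\,M\le n$.

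The only step that needs care is Step 1, the passage from finitely generated modules to arbitrary modules. I would cite Auslander's theorem for it rather than reprove it. If one only considers finitely generated modules, as is customary in this paper's setting, Step 1 can be dropped. The rest is routine homological algebra.
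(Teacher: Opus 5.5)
Your proof is correct. The paper gives no proof of this lemma and simply quotes it from [We04], so there is no competing route to compare against. What you have written is the standard textbook proof behind that citation:
\begin{itemize}
\item Auslander's theorem reduces the question to cyclic, hence finite-length, modules.
\item Induction on composition length, via the long exact $\mathrm{Ext}$ sequence, carries the vanishing of $\mathrm{Ext}^{n+1}_A(S,-)$ from the simples to every finite-length module.
\item The splitting of $0\to\Omega^{n+1}M\to P_n\to\Omega^n M\to 0$ turns that vanishing into $\mathrm{pd}\,M\le n$.
\end{itemize}

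Two minor remarks. First, the finiteness of the set of simples is never used. It holds for every finite-dimensional algebra anyway, so you do not need $A$ basic or $\mathbf{k}$ algebraically closed. Second, your argument works verbatim for any ring whose cyclic right modules have finite length, e.g.\ any right Artinian ring. Compared with the bare citation, your version shows exactly where finite-dimensionality enters: every module that must be tested is an iterated extension of simples.
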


\section{The global dimension of a string algebra}\label{The global dimension of a string algebra}

In this section, we mainly rely on the relationship between the projective dimension of simple modules and the global dimension of a string algebra  to provide a combinatorial method for calculating the global dimension of a string algebra, and give a necessary and sufficient condition for the global dimension of a string algebra to be infinite.

\subsection{The projective dimension of simple modules }\label{subsection: The projective dimension of  simple modules}

In this subsection, we mainly give a formula for calculating the projective dimension of simple modules over a string algebra by using  combinatorial methods.

\begin{definition}\label{def:min-rel-chain}
	For a generalized non-trivial walk \( W = w_1 \cdot w_2 \cdots w_n \), we define
	\begin{enumerate}
		\item \( \mathscr{F}(W) = w_1 \), \( \mathscr{L}(W) = w_n \);
		\item \( \mathsf{Rel} := \left\{ W = w_1 \cdot w_2 \cdots w_n \,\middle|\, 
		w_i \in \textbf{Pa}_{>0}, \text{and}~w_{i + 1} \text{ is minimal for } w_i\right\} \);
		\item \( \mathsf{Rel}(\alpha) := \{ W \in \mathsf{Rel} \mid \mathscr{F}(W) = \alpha, \alpha \in Q_{1} \} \);
		\item \( \mathsf{Rel}(\alpha)_k := \{ W \in \mathsf{Rel}(\alpha) \mid l(W) = k,  k \in \mathbb{Z_{+}} \} \).
	\end{enumerate}
	We call the elements of the set Rel as $\emph{minimal relation chains}$, and those of the set Rel($\alpha$) as minimal relation chains whose initial term is an arrow.
\end{definition}

\begin{definition}
	For any vertex \( v \in Q_0 \), define a resolution \( P^\bullet \to S(v) \):
	
	\begin{equation} \label{equ:proj}
		\cdots \xrightarrow{\partial^{-i - 1}} P^{-i} \xrightarrow{\partial^{-i}} P^{-i + 1} \xrightarrow{\partial^{-i + 1}} \cdots \xrightarrow{\partial^{-2}} P^{-1} \xrightarrow{\partial^{-1}} P^0 \xrightarrow{\partial^0} S(v) \to 0  \tag{$\divideontimes$}.
	\end{equation}

	The modules in the resolution are:
\begin{gather*}
	P^0 = P(v), \quad P^{-1} = \bigoplus_{\alpha \in s^{-1}(v)} P(t(\alpha)), \\
	P^{-i} = \bigoplus_{\substack{\alpha \in s^{-1}(v) \\ W \in \mathsf{Rel}(\alpha)_i}} P(t(\mathscr{L}(W))), \quad i > 1.
\end{gather*}
	where \( W = w_1 \cdot w_2 \cdots w_n \), if \( \alpha \) does not exist, then \( P(t(\alpha)) = 0 \); If \( W \) does not exist, then \( P(t(\mathscr{L}(W))) = 0 \).
	
	The differentials in the resolution are:
	
	$\partial^0 = \pi$,
	 
	$\partial^{-1} = (p(\delta)~~p(\epsilon))$, $\delta \neq \epsilon$, $s(\delta) = s(\epsilon) = v$, if  $\delta$  does not exist, then  $p(\delta) = 0$, similarly for  $\epsilon$,
	
	For \( i > 1 \), \( \partial^{-i} = (\partial^{-i}_{jk})_{1 \leq j \leq 2^{i - 1}, 1 \leq k \leq 2^i} \), where
	\[
	\partial^{-i}_{jk} =
	\begin{cases}
		p(\mathscr{L}(W)), \, j = \mathcal{L}(p(w_{i - 1})), \, k = 2j - 1, \, \mathscr{L}(W) \text{ is the short kernel of } p(w_{i - 1}) \\
		\substack{\alpha \in s^{-1}(v) \\ W \in \mathsf{Rel}(\alpha)_i} \\ \\
		p(\mathscr{L}(W)), \, j = \mathcal{L}(p(w_{i - 1})), \, k = 2j, \, \mathscr{L}(W) \text{ is the long kernel of } p(w_{i - 1}) \\
		\substack{\alpha \in s^{-1}(v) \\ W \in \mathsf{Rel}(\alpha)_i} \\  \\
		0, \, \text{otherwise}
	\end{cases}
	\]
	
	Here, \( \pi \) is the natural projection, \( S(v) \) and \( P(v) \) are the simple module and projective module at the vertex \( v \) respectively. The function $\mathcal{L}$ is used to calculate the column index of elements in a matrix.
\end{definition}

\begin{remark}
	From the definition of string algebras, the structure of \( \ker p(w) \) (\( w \in \mathbf{Pa}_{>0} \)), and Definition \ref{def:minimal sub-path} , it can be known that the number of elements in the set \( \mathsf{Rel}(\alpha)_k \) is at most \( 2^k \). Therefore, in the above resolution \ref{equ:proj}, the module 
\begin{center}
		\( P^{-i} = \bigoplus\limits_{\substack{\alpha \in s^{-1}(v) \\ W \in \mathsf{Rel}(\alpha)_i}} P(t(\mathscr{L}(W))) \),  $i > 1 $
\end{center} 
	contains \( 2^k \) direct summands, including the zero module.
\end{remark}

The following result provides the formula for the projective resolution of simple modules.

\begin{lemma}\label{lem:proj-formula}
	Let \( A = \mathbf{k}Q/I \) be a string algebra, \( v \in Q_0 \), then the resolution (\ref{equ:proj}) is the minimal projective resolution of the simple module \( S(v) \).
\end{lemma}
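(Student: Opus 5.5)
We prove exactness of (\ref{equ:proj}) degree by degree, building it as an iterated syzygy computation, and then check minimality from the fact that every differential has entries in the radical. In degree $0$, $\ker \pi = \operatorname{rad} P(v)$. By (S1) there are at most two arrows $\delta,\epsilon$ starting at $v$. Since $A$ is monomial, $\operatorname{rad}P(v) = \delta A + \epsilon A$. This sum is direct: $\mathbf{Pa}$ is a basis, and every nonzero path of positive length starting at $v$ begins with exactly one of $\delta,\epsilon$. Moreover $\delta A \cong P(t(\delta))$ via $p(\delta)$, because left multiplication by $\delta$ is injective on $e_{t(\delta)}A$. Indeed, a nonzero path $u$ with $\delta u \ne 0$ stays a basis element, and distinct $u$ give distinct $\delta u$; the only issue is paths $u$ with $\delta u=0$.

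This last point is exactly what the kernel lemma handles, and it is the general inductive step. For $w\in\mathbf{Pa}_{>0}$ the image of $p(w)$ is $wA$, and by Lemma \ref{Lem: Ker} we have $\ker p(w) = aA\oplus b^*A$, where $a$ is the short kernel and $b^*$ the long kernel. Each summand $uA$ (with $u\in\{a,b^*\}$) is again the image of $p(u): P(t(u))\to P(s(u))$. So the syzygy at each step is a direct sum of modules of the form $wA\cong P(t(w))/\ker p(w)$.

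Inductively we claim the following: $\Omega^{i}S(v)\cong\bigoplus wA$, where $w$ ranges over the last terms $\mathscr{L}(W)$ of chains $W\in\mathsf{Rel}(\alpha)_i$ with $\alpha\in s^{-1}(v)$. The base case $i=1$ is the computation above. For the step, the kernel of $\partial^{-i}$ restricted to the summand $P(t(\mathscr L(W)))$ is $\ker p(\mathscr L(W))$. Since $\partial^{-i}$ is block diagonal with one block per chain, the total kernel is the direct sum of these. Each kernel splits as the short and long kernel summands, and these are precisely the minimal paths $u$ for $w=\mathscr L(W)$, i.e.\ the extensions $W\cdot u\in\mathsf{Rel}(\alpha)_{i+1}$ (see the remark after Definition \ref{def:minimal sub-path}, and conversely any minimal $u$ equals $a$ or $b^*$). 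The matrix indices ($k=2j-1$ for the short kernel, $k=2j$ for the long kernel) encode exactly this binary branching. Hence $\operatorname{im}\partial^{-i-1}=\ker\partial^{-i}$ and exactness follows.

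The step I expect to be the main obstacle is making the identification of the kernel with the $\mathsf{Rel}$-extensions airtight. We must show that the kernel of $p(w)$ on $P(t(w))$ is generated by minimal paths, and that no kernel elements arise from linear combinations. Monomiality handles the second point: $p(w)$ sends distinct basis paths to distinct paths or to zero, so its kernel is spanned by basis paths. We must also check that the direct sum over different chains does not interact, which holds because the differential has no off-diagonal entries between blocks. For the first point I would rely on Lemma \ref{lem:right-unique} and Lemma \ref{Lem: Ker}, using that any $u$ with $wu=0$ contains $a$ or $b^*$ as an initial subpath.

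Finally, minimality follows because every nonzero entry $p(\mathscr L(W))$ of $\partial^{-i}$ is a path of positive length. Hence $\operatorname{im}\partial^{-i}\subseteq \operatorname{rad}P^{-i+1}$, and each $P^{-i}\to \ker\partial^{-i+1}$ is a projective cover: it is the direct sum of the covers $P(t(u))\to uA$, each of which has indecomposable domain and local image.
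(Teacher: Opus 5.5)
This is correct and is essentially the paper's proof. You identify $\ker\partial^{-i}$ with $\bigoplus_{W}\ker p(\mathscr L(W))$ and rewrite it via Lemma \ref{Lem: Ker} as $\bigoplus_{W\in\mathsf{Rel}(\alpha)_{i+1}}\mathscr L(W)A=\operatorname{im}\partial^{-i-1}$. Minimality then holds because every nonzero entry is a path of positive length. Your observation that $p(w)$ sends basis paths to distinct basis paths or to $0$, so its kernel is spanned by paths, is a useful justification that the paper leaves to the cited lemma.

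One sentence must be deleted: the claim that $\delta A\cong P(t(\delta))$ via $p(\delta)$, i.e.\ that left multiplication by $\delta$ is injective on $e_{t(\delta)}A$, is false in general. For instance, in the paper's example $ab=0$, so $b\in\ker p(a)$. If it held for every arrow, every simple would have projective dimension at most $1$. What you actually use afterwards, correctly, is $wA\cong P(t(w))/\ker p(w)$.

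Also, the kernel does not split just because there are no entries between blocks. The sibling chains $W\cdot a$ and $W\cdot b^*$ map into the same summand $P(t(\mathscr L(W)))$. You need $aA\cap b^*A=0$, which holds because $a$ and $b^*$ begin with different arrows. This is the $\oplus$ in Lemma \ref{Lem: Ker} and the paper's ``linear independence'' step.
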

\begin{proof}
	Since 
	\begin{center}
		\( P^0 = P(v) \), \( P^{-1} = \bigoplus\limits_{\alpha \in s^{-1}(v)} P(t(\alpha)) \), 
		
		\( P^{-i} = \bigoplus\limits_{\substack{\alpha \in s^{-1}(v) \\ W \in \mathsf{Rel}(\alpha)_i}} P(t(\mathscr{L}(W))) \) (\( i > 1 \)),
	\end{center}
	 for any \( i \geq 0 \), \( P^{-i} \) is a projective module.
	
	For \( i > 1 \), from the definition of \( \partial^{-i} = (\partial^{-i}_{jk})_{1 \leq j \leq 2^{i- 1 }, 1 \leq k \leq 2^i} \), we have:
	\[
	\partial^{-i}_{jk} = \begin{pmatrix}
		p(u_1) & p(u_2) & 0 & 0 & 0 & 0 & \cdots & 0 & 0 \\
		0 & 0 & p(u_3) & p(u_4) & 0 & 0 & \cdots & 0 & 0 \\
		\vdots & \vdots & \vdots & \vdots & \vdots & \vdots & \ddots & \vdots & \vdots \\
		0 & 0 & 0 & 0 & 0 & 0 & \cdots & p(u_{2^i - 1}) & p(u_{2^i})
	\end{pmatrix}_{2^{i- 1 } \times 2^i}
	\]
	In the \( j \)-th (\( 1 \leq j \leq 2^{i- 1 } \)) row of the differential \( \partial^{-i}_{jk} \), the possible non-zero elements are \( p(u_{2j - 1}) \) and \( p(u_{2j}) \), which are located in the \( (2j - 1) \)-th column and the \( 2j \)-th column respectively. Here, \( u_1, u_2, \ldots, u_{2^i - 1}, u_{2^i} \) correspond one-to-one to the elements \( W_1, W_2, \ldots, W_{2^i - 1}, W_{2^i} \) in \( \mathsf{Rel}(\alpha)_i \), and \( \mathscr{L}(W_m) = u_m \), \( 1 \leq m \leq 2^i \), \( m \in \mathbb{Z} \).
	\[
	\partial^{-i}: P^{-i} = \bigoplus\limits_{\substack{\alpha \in s^{-1}(v) \\ W \in \mathsf{Rel}(\alpha)_i}} P(t(\mathscr{L}(W))) \longrightarrow P^{-i + 1} = \bigoplus\limits_{\substack{\alpha \in s^{-1}(v) \\ W \in \mathsf{Rel}(\alpha)_{i - 1}}} P(t(\mathscr{L}(W))).
	\]
	For \( (m_1, m_2, \ldots, m_{2^i - 1}, m_{2^i})^T \in P^{-i} \), where \( m_1, m_2, \ldots, m_{2^i - 1}, m_{2^i} \) belong to different direct summands of \( P^{-i} \), we have:
	
	\[
	\begin{aligned}
		&\partial^{-i} \cdot (m_1, m_2, \ldots, m_{2^i - 1}, m_{2^i})^T \\
		&= \begin{pmatrix}
			p(u_1) & p(u_2) & 0 & 0 & 0 & 0 & \cdots & 0 & 0 \\
			0 & 0 & p(u_3) & p(u_4) & 0 & 0 & \cdots & 0 & 0 \\
			\vdots & \vdots & \vdots & \vdots & \vdots & \vdots & \ddots & \vdots & \vdots \\
			0 & 0 & 0 & 0 & 0 & 0 & \cdots & p(u_{2^i - 1}) & p(u_{2^i})
		\end{pmatrix} \begin{pmatrix}
			m_1 \\
			m_2 \\
			\vdots \\
			m_{2^i}
		\end{pmatrix} \\
		&= \begin{pmatrix}
			p(u_1)(m_1) + p(u_2)(m_2) \\
			p(u_3)(m_3) + p(u_4)(m_4) \\
			\vdots \\
			p(u_{2^i - 1})(m_{2^i - 1}) + p(u_{2^i})(m_{2^i})
		\end{pmatrix} \\
		&= \begin{pmatrix}
			u_1 m_1 + u_2 m_2 \\
			u_3 m_3 + u_4 m_4 \\
			\vdots \\
			u_{2^i - 1} m_{2^i - 1} + u_{2^i} m_{2^i}
		\end{pmatrix}.
	\end{aligned}
	\]
	
	Therefore, 
	\begin{center}
	\begin{align*}
		\text{Im}(\partial^{-i}) 
		&= (u_1 A \oplus u_2 A) \oplus (u_3 A \oplus u_4 A) \oplus \cdots \oplus (u_{2^i - 1} A \oplus u_{2^i} A) \\
		&= \bigoplus\limits_{\substack{\alpha \in s^{-1}(v) \\ W \in \mathsf{Rel}(\alpha)_i}} \mathscr{L}(W)A .
	\end{align*}
	\end{center}
	
	If \( (m_1, m_2, \cdots, m_{2^i - 1}, m_{2^i})^T \in \ker (\partial^{-i}) \), then we have
	\[
	\begin{cases}
		u_1 m_1 + u_2 m_2 = 0 \\
		u_3 m_3 + u_4 m_4 = 0 \\
		\vdots \\
		u_{2^i - 1} m_{2^i - 1} + u_{2^i} m_{2^i} = 0
	\end{cases}.
	\]
	Since \( u_1 m_1 \) and \( u_2 m_2 \) are linearly independent, we know \( u_1 m_1 = 0 \) and \( u_2 m_2 = 0 \). Similarly, we can obtain:
	\[
	u_3 m_3 = u_4 m_4 = \cdots = u_{2^i - 1} m_{2^i - 1} = u_{2^i} m_{2^i} = 0.
	\]
	That is, 
	\begin{center}
		\( m_1 \in \ker p(u_1) \), \( m_2 \in \ker p(u_2) \), \(\ldots\), \( m_{2^i - 1} \in \ker p(u_{2^i - 1}) \), \( m_{2^i} \in \ker p(u_{2^i}) \).
	\end{center} 
	Thus,
   \[
	\begin{aligned}
		\ker(\partial^{-i})  &\subseteq \ker p(u_1) \oplus \ker p(u_2) \oplus \cdots \oplus \ker p(u_{2^i - 1}) \oplus \ker p(u_{2^i}) \\
		&= \bigoplus\limits_{\substack{\alpha \in s^{-1}(v) \\ W \in \mathsf{Rel}(\alpha)_i}} \ker p(\mathscr{L}(W))\\ 
		& = \bigoplus\limits_{\substack{\alpha \in s^{-1}(v) \\ W \in \mathsf{Rel}(\alpha)_{i + 1}}} \mathscr{L}(W)A
	\end{aligned}.
	\]
	
	It is clear that 
\begin{center}
	\( \ker p(u_1) \oplus \ker p(u_2) \oplus \cdots \oplus \ker p(u_{2^i - 1}) \oplus \ker p(u_{2^i}) \subseteq \ker(\partial^{-i})  \). 
\end{center}
Therefore, 
     \[
		\ker(\partial^{-i}) = \bigoplus\limits_{\substack{\alpha \in s^{-1}(v) \\ W \in \mathsf{Rel}(\alpha)_{i + 1}}} \mathscr{L}(W)A. 
\] 
	Hence, for \( i > 1 \), \( \text{Im}(\partial^{-(i + 1)}) = \ker(\partial^{-i}) \).
	
	Next, we prove 
	\( \text{Im}(\partial^{-2}) = \ker(\partial^{-1}) \) and \( \text{Im}(\partial^{-1}) = \ker(\pi) \).
	
	It is known that \( \text{Im}(\partial^{-1}) = \bigoplus\limits_{\alpha \in s^{-1}(v)} \alpha A \), which is the submodule generated by all paths greater than 0 of \( P^0 = P(v) \), that is, \( \text{Im}(\partial^{-1}) = \ker(\pi) \). In addition, if \( v \) is the starting point of two different arrows \( \delta, \epsilon \), then \( \partial^{-1} = (p(\delta)~~~~p(\epsilon)) \). 
	For any element \( (x, y)^T \in \ker(\partial^{-1}) \), where \( x \in P(t(\delta)) \), \( y \in P(t(\epsilon)) \), then \( \partial^{-1}(x, y)^T = \delta x + \epsilon y = 0 \). And since \( \delta x, \epsilon y \) are linearly independent, \( \delta x = 0 \), \( \epsilon y = 0 \), thus \( x \in \ker p(\delta) \), \( y \in \ker p(\epsilon) \), that is
	\[
	\begin{aligned}
		\ker(\partial^{-1})  &\subseteq \ker p(\delta) \oplus \ker p(\epsilon) \\
		&= \bigoplus\limits_{\alpha \in s^{-1}(v)} \ker p(\alpha) \\
		&= \bigoplus\limits_{\substack{\alpha \in s^{-1}(v) \\ W \in \mathsf{Rel}(\alpha)_1}} \ker p(\mathscr{L}(W)) \\
		&= \bigoplus\limits_{\substack{\alpha \in s^{-1}(v) \\ W \in \mathsf{Rel}(\alpha)_2}} \mathscr{L}(W)A \\
		&= \text{Im}~(\partial^{-2}).
	\end{aligned}
	\]
	It is clear that \( \ker p(\delta) \oplus \ker p(\epsilon) \subseteq \ker(\partial^{-1}) \).
    Therefore, \( \ker(\partial^{-1}) = \text{Im}(\partial^{-2}) \).
	
	Next, we need to prove the minimality of resolution \ref{equ:proj}, that is, we need to prove that each differential \( \partial^{-i} (i \geq 1) \) and the natural projection \( \pi \) in the resolution are projective covers. 
	In other words, we need to prove 
    \( \ker(\pi) \subseteq P^0 \text{rad}(A) \)~~and~~~ \( \ker(\partial^{-i}) \subseteq P^{-i} \text{rad}(A) , i \geq 1 \) . 

	For any vertex \( v \in Q_0 \), we have 
	\begin{align*}
		P(v)\operatorname{rad}(A) &= \operatorname{rad}(P(v)) \\
		&= \bigoplus_{\alpha \in s^{-1}(v)} \alpha A .
	\end{align*}
	
	Then, when \( i \geq 2 \),
	\begin{align*}
		P^{-i}\operatorname{rad}(A) 
		&= \bigoplus_{\substack{\alpha \in s^{-1}(v) \\ W \in \mathsf{Rel}(\alpha)_i}} P(t(\mathscr{L}(W)))\operatorname{rad}(A) \\
		&= \bigoplus_{\substack{\alpha \in s^{-1}(v) \\ W \in \mathsf{Rel}(\alpha)_i \\ \beta \in s^{-1}(t(\mathscr{L}(W)))}} \beta A, \\[4pt]
		\ker(\partial^{-i}) 
		&= \bigoplus_{\substack{\alpha \in s^{-1}(v) \\ W \in \mathsf{Rel}(\alpha)_{i + 1}}} \mathscr{L}(W)A \\
		&\subseteq P^{-i}\operatorname{rad}(A).
	\end{align*}
	
	In addition,
	\begin{align*}
		P^{-1}\operatorname{rad}(A) 
		&= \bigoplus_{\alpha \in s^{-1}(v)} P(t(\alpha))\operatorname{rad}(A) \\
		&= \bigoplus_{\substack{\alpha \in s^{-1}(v) \\ \beta \in s^{-1}(t(\alpha)) }} \beta A, \\[4pt]
		\ker(\partial^{-1})
		&= \bigoplus_{\substack{\alpha \in s^{-1}(v) \\ W \in \mathsf{Rel}(\alpha)_2}} \mathscr{L}(W)A \\
		&\subseteq P^{-1}\operatorname{rad}(A).
	\end{align*}
	
	Moreover,
	\begin{align*}
		P^0\operatorname{rad}(A) 
		&= P(v)\operatorname{rad}(A) \\
		&= \bigoplus_{\alpha \in s^{-1}(v)} \alpha A \\
		&= \ker(\pi).
	\end{align*}
	
	Therefore, the resolution \ref{equ:proj} is the minimal projective resolution of the simple module \( S(v) \).

\end{proof}

\begin{remark}
 The possible non-zero elements $u_{1}, u_{2}$ in the differential $\partial^{-i}_{jk}$ are the short kernel and long kernel of $p(\mathscr{L}(W'_{1}))$ associated with $W'_{1} \in \mathop{\mathsf{Rel}}\limits_{\alpha \in s^{-1}(v)}(\alpha)_{i-1}$, respectively. Similarly, $u_{3}, u_{4}$ correspond to $W'_{2}$, and $u_{2^{i}-1}, u_{2^{i}}$ correspond to $W'_{2^{i-1}}$. 
	 
	 In addition, the following conditions apply:
	 \begin{itemize}
	 	\item If $u_{k}$ does not exist (for any $k \in \{1, \dots, 2^{i}\}$), then $p(u_{k}) = 0$.
	 	\item If $W'_{m}$ does not exist, then $p(u_{2m-1}) = p(u_{2m}) = 0$ (for any $m \in \{1, \dots, 2^{i-1}\}$).
	 \end{itemize}
	 
\end{remark}

\subsection{The global dimension of a string algebra}\label{subsection: The global dimension of a string algebra}
According to the relationship between the global dimension of a string algebra and the projective dimension of simple modules over it, on the basis of Lemma \ref{lem:proj-formula}, the main conclusion of this chapter is given below.

\begin{theorem}\label{thm:gldim}
	The global dimension of a string algebra \( A = \mathbf{k}Q/I \) is
	\[
	\textbf{gl.dim}~A = \max\limits_{\substack{\alpha \in Q_1 \\ W \in \mathsf{Rel}(\alpha)}} l(W).
	\]
\end{theorem}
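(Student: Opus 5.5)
The plan is to combine Lemma \ref{lem:gldimA} with Lemma \ref{lem:proj-formula}. By Lemma \ref{lem:gldimA}, $\textbf{gl.dim}\,A=\sup_{v\in Q_0}\operatorname{pd} S(v)$, so the task reduces to reading off $\operatorname{pd} S(v)$ from the minimal projective resolution (\ref{equ:proj}). Since (\ref{equ:proj}) is minimal, $\operatorname{pd} S(v)$ equals the largest $i$ with $P^{-i}\neq 0$, or $\infty$ if no such largest $i$ exists. It then remains to identify this index combinatorially and take the maximum over $v$.

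First I will show that, for each vertex $v$,
\[
\operatorname{pd} S(v)=\sup\{\, l(W) \mid \alpha\in s^{-1}(v),\ W\in\mathsf{Rel}(\alpha)\,\},
\]
with the convention that the supremum of the empty set is $0$. If $v$ is a sink, then $P^{-1}=0$ and $S(v)=P(v)$ is projective, so both sides are $0$. Otherwise, the definition gives $P^{-1}=\bigoplus_{\alpha\in s^{-1}(v)}P(t(\alpha))\neq 0$; these summands are indexed by the chains $W=\alpha\in\mathsf{Rel}(\alpha)_1$. For $i>1$, $P^{-i}$ is a direct sum of nonzero indecomposable projectives $P(t(\mathscr L(W)))$, indexed by the chains $W\in\mathsf{Rel}(\alpha)_i$ that actually exist. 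Hence $P^{-i}\neq 0$ if and only if $\mathsf{Rel}(\alpha)_i\neq\emptyset$ for some $\alpha\in s^{-1}(v)$.

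Next I need $\mathsf{Rel}(\alpha)_i$ to be \emph{truncation closed}: if $W=w_1\cdots w_i\in\mathsf{Rel}(\alpha)_i$, then $w_1\cdots w_{i-1}\in\mathsf{Rel}(\alpha)_{i-1}$. This holds because minimality of each $w_{j+1}$ for $w_j$ is a condition on consecutive terms only. Consequently the set of $i$ with $P^{-i}\neq 0$ is an initial segment $\{0,1,\dots\}$, and its supremum is the maximal length of a chain starting at an arrow out of $v$. This gives the displayed equality.

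Finally, I take the supremum over $v\in Q_0$. Every arrow $\alpha$ lies in $s^{-1}(s(\alpha))$, so the union over $v$ of the index sets is exactly $\{(\alpha,W) \mid \alpha\in Q_1,\ W\in\mathsf{Rel}(\alpha)\}$. This yields the stated formula, read as a supremum, possibly infinite, and as $0$ when $Q_1=\emptyset$.

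The step I expect to be the main obstacle is the bookkeeping in the identification of $P^{-i}$. The resolution is written with $2^i$ formal slots, many of them zero, so I must check that a nonzero summand occurs exactly when a genuine chain exists. In particular, a chain of length $i+1$ exists precisely when $\ker p(\mathscr L(W))\neq 0$ for some $W$ of length $i$. This follows from Lemma \ref{Lem: Ker} together with the remark that the short and long kernels are minimal for $\mathscr L(W)$. Conversely, any $u$ minimal for $w$ is the short or the long kernel, since $wu=0$ forces $u\in\ker p(w)=aA\oplus b^*A$ and minimality then forces $u\in\{a,b^*\}$. So chains correspond bijectively to nonzero summands, and the argument above goes through.
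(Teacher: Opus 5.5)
Your proposal is correct and takes the same route as the paper: Lemma \ref{lem:proj-formula} gives $\textbf{proj.dim}\, S(v)$ as the maximal length of a chain in $\mathsf{Rel}(\alpha)$ with $\alpha\in s^{-1}(v)$, and Lemma \ref{lem:gldimA} then lets you take the maximum over $v\in Q_0$. Your extra bookkeeping makes explicit the step the paper states in one line; this covers the sink case, the truncation-closure of $\mathsf{Rel}(\alpha)_i$, the matching of existing chains with nonzero summands of $P^{-i}$, and reading the maximum as a possibly infinite supremum.
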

\begin{proof}
	 By Lemma \ref{lem:proj-formula}, for any \( v \in Q_0 \), the projective dimension of the corresponding a simple module is equal to the length of the longest element \( W \) in the set \( \underset{\alpha \in s^{-1}(v)}{\mathsf{Rel}}(\alpha) \), that is,
	\[
	\textbf{proj.dim} S(v) = \max\limits_{\substack{\alpha \in s^{-1}(v) \\ W \in \mathsf{Rel}(\alpha)}} l(W).
	\]
	
	By Lemma \ref{lem:gldimA}, We can directly obtain the global dimension of $A$, that is,
	\[
	\textbf{gl.dim}~A = \max\limits_{v \in Q_0} \textbf{proj.dim} S(v).
	\]
	
	\noindent Thus, we have
	\[
	\textbf{gl.dim}~A = \max\limits_{v \in Q_0} \max\limits_{\substack{\alpha \in s^{-1}(v) \\ W \in \mathsf{Rel}(\alpha)}} l(W) = \max\limits_{\substack{\alpha \in Q_1 \\ W \in \mathsf{Rel}(\alpha)}} l(W).
	\]
\end{proof}

By Theorem \ref{thm:gldim}, we can easily derive the necessary and sufficient condition for \( \text{gldim}\,A = \infty \).
\begin{proposition}\label{pro:gldim-infinite}
	For a string algebra \( A = \mathbf{k}Q/I \), the global dimension of \( A \) is infinite if and only if there exists \( W \in \mathop{\mathsf{Rel}}\limits_{\alpha \in Q_{1}} (\alpha) \) such that \( l(W) = \infty \).
\end{proposition}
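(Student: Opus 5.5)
We deduce this directly from Theorem \ref{thm:gldim}, whose proof gives, for every vertex $v$,
\[
\textbf{proj.dim}\, S(v) = \sup_{\alpha \in s^{-1}(v),\, W \in \mathsf{Rel}(\alpha)} l(W),
\]
and then $\textbf{gl.dim}\,A = \max_{v \in Q_0} \textbf{proj.dim}\, S(v)$ by Lemma \ref{lem:gldimA}. Here we read $\max$ as a supremum, and we allow a minimal relation chain to be an infinite sequence $w_1 w_2 \cdots$ in which $w_{i+1}$ is minimal for $w_i$ for every $i$. Such a chain has length $\infty$.

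\emph{Sufficiency.} Suppose $W = w_1 w_2 \cdots \in \mathsf{Rel}(\alpha)$ is infinite with $\alpha = w_1$, and put $v = s(\alpha)$. Every truncation $w_1 \cdots w_i$ lies in $\mathsf{Rel}(\alpha)_i$, so it contributes a nonzero summand $P(t(w_i))$ to $P^{-i}$ in the resolution (\ref{equ:proj}). Lemma \ref{lem:proj-formula} says this resolution is minimal, so $P^{-i} \neq 0$ for all $i$. Hence $\textbf{proj.dim}\, S(v) = \infty$, and therefore $\textbf{gl.dim}\, A = \infty$.

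\emph{Necessity.} Suppose $\textbf{gl.dim}\, A = \infty$. Since $Q_0$ is finite, some vertex $v$ has $\textbf{proj.dim}\, S(v) = \infty$. If some $\mathsf{Rel}(\alpha)$ with $\alpha \in s^{-1}(v)$ already contains an infinite chain, we are done. Otherwise we have finite chains of arbitrarily large length starting at an arrow of $s^{-1}(v)$, and there are at most two such arrows. We then build an infinite chain by a K\"onig's lemma argument.

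Form the rooted tree whose nodes are the finite chains in $\mathsf{Rel}(\alpha)$, for $\alpha \in s^{-1}(v)$, ordered by extension. This tree is finitely branching. The reason is that, by Lemma \ref{Lem: Ker} and the remark after Definition \ref{def:minimal sub-path}, a path $w_{i+1}$ minimal for $w_i$ must be either the short kernel or the long kernel of $p(w_i)$. So each node has at most two children. This matches the count of at most $2^k$ elements in $\mathsf{Rel}(\alpha)_k$.

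The tree has nodes of every depth, so K\"onig's lemma gives an infinite branch. That branch is an infinite minimal relation chain $W \in \mathsf{Rel}(\alpha)$ with $l(W) = \infty$.

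\emph{Main obstacle.} The delicate point is to justify that a path minimal for $w_i$ is exactly one of the two generators $a$ or $b^*$ of $\ker p(w_i)$. Suppose $u$ is minimal for $w_i$. Then $u \in \ker p(w_i) = aA \oplus b^*A$, and $u$ is a path, so it has $a$ or $b^*$ as a left subpath. Minimality then forces $u$ to equal that subpath.

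We also want to record a finiteness remark. Since there are finitely many paths in $\mathbf{Pa}_{>0}$ and the next term depends only on the previous one, an infinite chain necessarily becomes eventually periodic. This matches the familiar cyclic obstruction and confirms that the supremum is attained exactly when it is finite.
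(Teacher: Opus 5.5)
Your proof follows the paper's route: the formula $\textbf{proj.dim}\, S(v)=\sup l(W)$ from Lemma~\ref{lem:proj-formula}, finiteness of $Q_0$, and truncations of an infinite chain for sufficiency. Your necessity direction is more careful than the paper's. The paper passes directly from $\max l(W)=\infty$ to the existence of a chain with $l(W)=\infty$, which conflates unbounded lengths with an infinite chain (and never says what an infinite walk is). You make the infinite-chain convention explicit and close the step with K\"onig's lemma, correctly using that each term has at most two minimal successors, namely the short and long kernel.

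Your closing remark, however, is wrong as stated: the next term is \emph{not} determined by the previous one. Take the string algebra with one vertex, loops $x,y$ and $I=\langle x^2,y^2,xyx,yxy\rangle$. There $x$ has successors $x$ and $yx$, and $yx$ has successors $x$ and $y$. So an infinite chain need not be eventually periodic. What finiteness of $\textbf{Pa}_{>0}$ does give is this: a chain of length greater than $|\textbf{Pa}_{>0}|$ repeats a term, and can therefore be pumped into an eventually periodic infinite chain. That is an elementary alternative to K\"onig's lemma, and it also yields $\textbf{proj.dim}\, S(v)\le|\textbf{Pa}_{>0}|$ whenever this is finite. Since the remark is not used in your argument, the proof itself stands.
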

\begin{proof}
	For any simple module \( S(v) \) (associated with vertex \( v \in Q_0 \)) over a string algebra \( A \), the projective dimension \( \textbf{proj.dim}\,S(v) \) is determined by the maximum number of the length of the elements in \( \mathop{\mathsf{Rel}}\limits_{\alpha \in s^{-1}(v)} (\alpha) \):
	\[
	\textbf{proj.dim}\,S(v) = \max\limits_{\substack{\alpha \in s^{-1}(v) \\ W \in \mathsf{Rel}(\alpha)}} l(W),
	\]
	where \( s^{-1}(v) \subseteq Q_1 \) denotes the set of arrows with source \( v \), and \( l(W) \) is the length of \( W \).

	The global dimension of \( A \) is defined as the maximum projective dimension of all simple modules over \( A \):
	\[
	\textbf{gl.dim}\,A = \max\limits_{v \in Q_0} \textbf{proj.dim}\,S(v).
	\]

	Suppose there exists \( \alpha \in Q_1 \) and \( W \in \mathsf{Rel}(\alpha) \) such that \( l(W) = \infty \). Let \( v \) be the source of \( \alpha \) (i.e., \( \alpha \in s^{-1}(v) \)). By the formula for \( \textbf{proj.dim}\,S(v) \), the infinite length of \( W \) implies:
	\[
	\textbf{proj.dim}\,S(v) = \max\limits_{\substack{\alpha \in s^{-1}(v) \\ W \in \mathsf{Rel}(\alpha)}} l(W) = \infty.
	\]
	By the definition of global dimension, it follows that:
	\[
	\textbf{gl.dim}\,A = \max\limits_{v \in Q_0} \textbf{proj.dim}\,S(v) = \infty.
	\]

	Conversely, suppose \( \textbf{gl.dim}\,A = \infty \). By Lemma \ref{lem:gldimA}, there exists a simple module \( S(v) \) with \( \textbf{proj.dim}\,S(v) = \infty \). From the formula for \( \textbf{proj.dim}\,S(v) \), this implies:
	\[
	\max\limits_{\substack{\alpha \in s^{-1}(v) \\ W \in \mathsf{Rel}(\alpha)}} l(W) = \infty.
	\]
	Thus, there must exist  \( W \in \mathop{\mathsf{Rel}}\limits_{\alpha \in Q_{1}}(\alpha) \) such that \( l(W) = \infty \).
\end{proof}

\section{Example}

In this section, we give a concrete example to illustrate the result we have obtained.

\begin{example}

Consider the quiver $Q$
	\begin{center}
		\begin{tikzcd}
			1 \arrow[r, "a"]  & 2 \arrow[rd, "b"]                 &                                   & 6 \\
			&                                   & 3 \arrow[rd, "e"] \arrow[ru, "d"] &   \\
			7 \arrow[uu, "g"] & 4 \arrow[ru, "c"] \arrow[l, "f"'] &                                   & 5
		\end{tikzcd}
	\end{center}
	and the admissible ideal $I = <fga, ab, bd, be, cd, ce>$.  Then we have
	\[
	\begin{aligned}
		\mathsf{Rel}(a) &= \{a, a \cdot b, a \cdot b \cdot d, a \cdot b \cdot e\}, \\
		\mathsf{Rel}(b) &= \{b, b \cdot d, b \cdot e\}, \\
		\mathsf{Rel}(c) &= \{c, c \cdot d, c \cdot e\}, \\
		\mathsf{Rel}(d) &= \{d\}, \\
		\mathsf{Rel}(e) &= \{e\}, \\
		\mathsf{Rel}(f) &= \{f, f \cdot ga, f \cdot ga \cdot b, f \cdot ga \cdot b \cdot d, f \cdot ga \cdot b \cdot e\}, \\
		\mathsf{Rel}(g) &= \{g\},
	\end{aligned}
	\]
	
	Now, consider the projective resolution of simple modules over the string algebra \( A = \mathbf{k}Q/I \). It is easy to see that the simple modules \( S(5) \) and \( S(6) \) are projective modules, i.e., their projective dimensions are 0. We consider the projective resolutions of the remaining simple modules.\\
	\textbf{(1)  Projective resolution of \( S(1) \)}
	
	Projective modules in the resolution of \( S(1) \):
	\[
	\begin{aligned}
		P^0    &=  P(1), \\
		P^{-1} &=  P(2) \oplus 0 = P(2), \\
		P^{-2} &=  P(3) \oplus 0 \oplus 0 \oplus 0 = P(3) \oplus 0, \\
		P^{-3} &=  P(6) \oplus P(5) \oplus 0 \oplus 0 \oplus 0 \oplus 0 \oplus 0 \oplus 0  = P(6) \oplus P(5) \oplus 0 \oplus 0, 
	\end{aligned}
	\]

	Differentials in the resolution:
	\[
	\partial^{-1} = \begin{pmatrix} p(a) & 0 \end{pmatrix} \longleftrightarrow p(a),
	\]
	\[
	\partial^{-2} = \begin{pmatrix} p(b) & 0 & 0 & 0 \\ 0 & 0 & 0 & 0 \end{pmatrix} \longleftrightarrow \begin{pmatrix} p(b) & 0 \end{pmatrix},
	\]
	\[
	\partial^{-3} = \begin{pmatrix} p(d) & p(e) & 0 & 0 & 0 & 0 & 0 & 0 \\ 0 & 0 & 0 & 0 & 0 & 0 & 0 & 0 \\ 0 & 0 & 0 & 0 & 0 & 0 & 0 & 0 \\ 0 & 0 & 0 & 0 & 0 & 0 & 0 & 0 \end{pmatrix} \longleftrightarrow \begin{pmatrix} p(d) & p(e) & 0 & 0 \\ 0 & 0 & 0 & 0 \end{pmatrix}.
	\]
	
	The projective resolution of \( S(1) \) is
	\[
	0 \to P(6) \oplus P(5) \oplus 0 \oplus 0 \xrightarrow{\partial^{-3}} P(3) \oplus 0 \xrightarrow{\partial^{-2}} P(2) \xrightarrow{\partial^{-1}} P(1) \to S(1)  \to 0,
	\]
	then the projective dimension of the simple module \( S(1) \) is 
	\begin{center}
		\( \textbf{proj.dim} S(1) = \max\limits_{\substack{W \in \mathsf{Rel}(a)}} l(W) = 3 \).
	\end{center}
	\textbf{(2)  Projective resolution of \( S(2) \)}
	
	Projective modules in the resolution of \( S(2) \):
	\[
	\begin{aligned}
		P^0    &=  P(2), \\
		P^{-1} &=  P(3) \oplus 0 = P(3), \\
		P^{-2} &=  P(6) \oplus P(5) \oplus 0 \oplus 0 = P(6) \oplus P(5), \\ 
	\end{aligned}
	\]
	Differentials in the resolution:
	\[
	\partial^{-1} = \begin{pmatrix} p(b) & 0 \end{pmatrix} \longleftrightarrow p(b),
	\]
	\[
	\partial^{-2} = \begin{pmatrix} p(d) & p(e) & 0 & 0 \\ 0 & 0 & 0 & 0 \end{pmatrix} \longleftrightarrow \begin{pmatrix} p(d) & p(e) \end{pmatrix}.
	\]
	
	The projective resolution of \( S(2) \) is
	\[
	0 \to P(6) \oplus P(5) \xrightarrow{\begin{pmatrix} p(d) & p(e) \end{pmatrix}} P(3) \xrightarrow{p(b)} P(2) \to S(2) \to 0,
	\]
	then the projective dimension of the simple module \( S(2) \) is 
	\begin{center}
		\( \textbf{proj.dim} S(2) = \max\limits_{\substack{W \in \mathsf{Rel}(b)}} l(W) = 2 \).
	\end{center}
	\textbf{(3)  Projective resolution of \( S(3) \)}
	
	Projective modules in the resolution of \( S(3) \): 
	\[
	\begin{aligned}
		P^0    &=  P(3), \\
		P^{-1} &=  P(6) \oplus P(5), \\
	\end{aligned}
	\]
	Differentials in the Resolution:
	\[
	\partial^{-1} = \begin{pmatrix} p(d) & p(e) \end{pmatrix}.
	\]
	
	\noindent The projective resolution of \( S(3) \) is:
	\[
	0 \to P(6) \oplus P(5) \xrightarrow{\begin{pmatrix} p(d) & p(e) \end{pmatrix}} P(3) \to S(3) \to 0,
	\]
	then the projective dimension of the simple module \( S(3) \) is 
\begin{center}
		\( \textbf{proj.dim}S(3) = \max\limits_{\substack{W \in \mathsf{Rel}(d) \cup \mathsf{Rel}(e)}}l(W) = 1 \).
\end{center}
	\textbf{(4)  Projective resolution of \( S(4) \)}
	
	Projective modules in the resolution: 
	\[
	\begin{aligned}
		P^0    &=  P(4), \\
		P^{-1} &=  P(3) \oplus P(7), \\
		P^{-2} &=  P(6) \oplus P(5) \oplus 0 \oplus P(2), \\ 
		P^{-3} &=  0 \oplus 0 \oplus 0 \oplus 0 \oplus 0 \oplus 0 \oplus P(3) \oplus 0,\\ 
		P^{-4} &=  0 \oplus 0 \oplus 0 \oplus 0 \oplus 0 \oplus 0 \oplus 0 \oplus 0 \oplus 0 \oplus 0 \oplus 0 \oplus 0 \oplus P(6) \oplus P(5) \oplus 0 \oplus 0,\\
	\end{aligned}
	\]
	 Differentials in the resolution:
	\[
	\partial^{-1} = \begin{pmatrix} p(c) & p(f) \end{pmatrix},
	\]
	\[
	\partial^{-2} = \begin{pmatrix} p(d) & p(e) & 0 & 0 \\ 0 & 0 & 0 & p(ga) \end{pmatrix},
	\]
	\[
	\partial^{-3} = \begin{pmatrix}0 & 0 & 0 & 0 & 0 & 0 & 0 & 0 \\ 0 & 0 & 0 & 0 & 0 & 0 & 0 & 0 \\ 0 & 0 & 0 & 0 & 0 & 0 & 0 & 0 \\ 0 & 0 & 0 & 0 & 0 & 0 & p(b) & 0 \end{pmatrix},
	\]
	\[
	\partial^{-4} = \begin{pmatrix} 0 & 0 & \cdots & 0 & 0 & 0 & 0 \\ 0 & 0 & \cdots & 0 & 0 & 0 & 0 \\ \vdots & \vdots & \ddots & \vdots & \vdots & \vdots & \vdots \\ 0 & 0 & \cdots & p(d) & p(e) & 0 & 0 \\ 0 & 0 & \cdots & 0 & 0 & 0 & 0 \end{pmatrix}_{8 \times 16}.
	\]
	
	\noindent The projective resolution of \( S(4) \) is:
	\[
	0 \to P^{-4} \xrightarrow{\partial^{-4}} P^{-3} \xrightarrow{\partial^{-3}} P^{-2} \xrightarrow{\partial^{-2}} P^{-1} \xrightarrow{\partial^{-1}} P(4) \to S(4) \to 0,
	\]
	then the projective dimension of the simple module \( S(4) \) is 
	\begin{center}
		\( \textbf{proj.dim}S(4) = \max\limits_{\substack{W \in \mathsf{Rel}(c) \cup \mathsf{Rel}(f)}} l(W) = 4 \).
	\end{center}
	\textbf{(5)  Projective resolution of \( S(7) \)}
	
	The projective modules in the resolution: 
	\[
	\begin{aligned}
		P^0    &=  P(7), \\
		P^{-1} &=  P(1) \oplus 0 = P(1), \\
	\end{aligned}
	\]
	Differentials in the resolution:
	\[
	\partial^{-1} = \begin{pmatrix} p(g) & 0 \end{pmatrix} \longleftrightarrow p(g).
	\]
	
	The projective resolution of \( S(7) \) is
	\[
	0 \to P(1) \xrightarrow{p(g)} P(7) \to S(7) \to 0,
	\]
	then the projective dimension of the simple module \( S(7) \) is 
	\begin{center}
		\( \textbf{proj.dim}S(7) = \max\limits_{\substack{W \in \mathsf{Rel}(g)}} l(W) = 1 \).
	\end{center}
	
	Moreover, since the global dimension of the string algebra \( A \) is equal to the maximum value of the projective dimensions of all its simple modules, it can be known that the global dimension of the string algebra \( A \) in this example is 4.

\end{example}

\end{document}